\newtheorem{theorem}{Theorem}[section]
\newtheorem{proposition}{Proposition}[section]
\newtheorem{definition}{Definition}[section]
\newtheorem{remark}{Remark}[section]
\newtheorem{lemma}{Lemma}[section]
\newtheorem{example}{Example}[section]
\newenvironment{proof}{\noindent{\em \textbf{Proof.}}}{\quad \hfill$\Box$\vspace{2ex}}
\journal{Journal of \LaTeX\ Templates}
\begin{document}

\begin{frontmatter}

\title{Biquaternion Z Transform}

\author{Wenshan Bi}
\address{Department of Mathematics, Faculty of Science and Technology, University of Macau, Macao, China}

\author{Zhen-Feng Cai}
\address{School of Science, Hubei University of Technology, Wuhan, Hubei 430000, P.R.China}

\author{Kit Ian Kou\corref{mycorrespondingauthor}}
\address{Department of Mathematics, Faculty of Science and Technology, University of Macau, Macao, China}
\cortext[mycorrespondingauthor]{Corresponding author}
\ead{kikou@um.edu.mo}


\begin{abstract}
In this work, the biquaternion Z transformation method is proposed to solve a class of biquaternion recurrence relations. Biqueternion Z transform is an natural extension of the complex Z transform. In the design process, special norm presentation is employed to analyze the region of convergence of the biquaternion geometry sequence. In addition, some useful properties have been given. It is shown that the proposed properties is helpful to understand the biquaternion Z transform.
Finally, several examples have been given to illustrate the effectiveness of the proposed design method.
\end{abstract}

\begin{keyword}
Quaternion algebras, recurrence relations, transfer function, Z transform
\end{keyword}

\end{frontmatter}

\linenumbers

\section{Introduction}
\label{intro}
During the past decades, Z transform has captured more and more attention from engineers due to its interesting properties and potential applications \cite{WH}-\cite{CTZapp5}.
The Z transform was first proposed  in 1947 to solve the difference equation \cite{WH}. This method  was applied to the transfer function problem of the impulse filter by Hurewicz \cite{Hurewicz}, and was called Z transforms by Ragazzini and Zadeh of Columbia University. The work of Ragazzini and Zadeh \cite{1952} includes an analysis of the input and output of the sampled-data systems. The Z transform of a sequence $f_n$ is defined as
$F(z)={\sum_{n=0}^\infty}f_nz^{-n}$,
which is a function of $z$ in complex domain. In general, both $f_n$ and $F(z)$ could be complex-valued. One of its applications is to establish the mathematical model of frequency-dependent finite-difference time domain method \cite{app}. Digital compensators can be used to improve the response of the sampled data system \cite{app1}-\cite{app3}. In particular, Jury and Schroeder \cite{app1} and Jury \cite{app2,app3} proposed to use the modified Z transform to examine the behavior between sampling transients and to design a digital compensator to suppress the "hidden" oscillations. In \cite{chirp}, Rabiner, Schafer and Rader together defined the chirp Z transform (CZT) based on the classical Z transform. The main idea is to use convolution to realize the fast Fourier transform of the discrete Fourier transform of any size, thus greatly reducing the computation time. In \cite{CTZapp1}, Leng and Song et al. applied CZT in frequency offset estimation of digital coherent orthogonal amplitude modulation, and solved the problem of uncertain delay. On the other hand, CZT plays an important role in the effective estimation of voltage flicker component in distribution system \cite{CTZapp2,CTZapp3}. Besides, in the channel prediction process of channel compensation in fading environment, CZT enables accurate detection even at low frequencies \cite{CTZapp4,CTZapp5}.

When working with sampled data the Z transform acts the role which is acted by the Laplace transform in continuous time systems. Specifically, in the continuous time domain model, the Laplace transform is usually used to transform the ordinary differential equation into the domain transfer function for theoretical analysis. When dealing with a system with discrete sampling time, the sequence model is transformed into a transfer function in the Z domain by using the Z transform.
It was shown in \cite{cai} that the quaternion differential equations with a continuous domain can be solved by Laplace transform method.

The Z transform of the biquaternion domain has rarely been discussed before. Generally speaking, the norm of biquaternion is complex-valued, which is difficult to analyze its convergence domain. In this paper, motivated by the fact that better results can be obtained when a special real-valued norm of biquaternion is exploited, a biquaternion Z transform method is presented.

The contributions of this work are summarized as follows.

\begin{enumerate}
	\item Although the theory of the Z transform has made rapid progress. However, this issue has not been discussed in the biquaternion domain. In this paper we give the suitable definition of biquaternion Z transform in Section \ref{d}.
	\item From the definition, we calculate the Z transform of some special functions, get many interesting properties. The basic properties and special function's Z transform are given in Table \ref{table:p1} and Table \ref{table:p2}.
	
	\item The difficulty process of solving a class of biquaternion recurrence relations is transformed into an algebraic quaternion problem. This method makes solving a class of biquaternion recurrence relations and the related initial values problem much convenient.
	Finally, the validity of the biquaternion Z transform is illustrated via several examples.
\end{enumerate}

The rest of this paper is organized as follows. The
definitions about biquaternion sequence, such as convergence of biquaternion sequence and biquaternion Z transform are given in section \ref{d}. In the following, we investigate its main properties and special biquaternion valued function's Z transform are given in section \ref{P}.  The method of using biquaternion z transform to solve biquaternion recurrence relations (\ref{difference}) is presented in section \ref{E}. Finally, we conclude this paper in section \ref{C}.

\section{Preliminary}\label{d}
\subsection{Biquaternions}
The  concept of biquaternion, a generalization of complex numbers, was first proposed by Hamilton in 1853. In general, we adopt the symbol $\mathbb{H(C)}$ to denote the set of biquaternion and $\mathbb{H(R)}$ to denote the set of real quaternion. For each biquaternion $\mathbf{q}$, it can be written as the following form
\begin{equation}\label{quaternion}
\mathbf{q}=q_0+q_1\mathbf{i}+q_2\mathbf{j}+q_3\mathbf{k},
\end{equation}
where $q_{n}\in\mathbb{C}, n=0,1,2,3$.
The basis elements $\{\mathbf{i},\mathbf{j},\mathbf{k}\}$ obey the Hamilton's multiplication rules
\begin{equation}\label{Hamilton's rules}
\begin{aligned}
\mathbf{i}^2&=\mathbf{j}^2=\mathbf{k}^2=\mathbf{i}\mathbf{j}\mathbf{k}=-1;\\
\mathbf{i}\mathbf{j}&=\mathbf{k},\mathbf{j}\mathbf{k}=\mathbf{i},\mathbf{k}\mathbf{i}=\mathbf{j};\\
\mathbf{j}\mathbf{i}&=-\mathbf{k},\mathbf{k}\mathbf{j}=-\mathbf{i},\mathbf{i}\mathbf{k}=-\mathbf{j}.\\
\end{aligned}
\end{equation}

For any biquaternion $\mathbf{p}, \mathbf{q} \in \mathbb{H(C)}$, we write $\mathbf{p}=p_0+p_1\mathbf{i}+p_2\mathbf{j}+p_3\mathbf{k}$, $\mathbf{q}=q_0+q_1\mathbf{i}+q_2\mathbf{j}+q_3\mathbf{k}$, where $p_{n}, q_{n}\in\mathbb{C}, n=0,1,2,3$. Let $\mathbf{p}=p_0+\underline{\mathbf p}$, we use $p_0:=Sc(\mathbf{p})$ be the scalar part of $\mathbf{p}$,  $\underline{\mathbf p}=:Vec(\mathbf{p})$ be the vector part of $\mathbf{p}$, and $\overline{\mathbf p}=:p_0-\underline{\mathbf p}$ be the quaternion conjugate of $\mathbf{p}$.
Define $(\underline{\mathbf p},\ \underline{\mathbf q}):=p_1q_1+p_2q_2+p_3q_3$ and
$\mathbf p \mathbf q:=p_0q_0- (\underline{\mathbf p},\ \underline{\mathbf q})+p_0\underline{\mathbf q}+q_0\underline{\mathbf p}+\underline{\mathbf p}\times \underline{\mathbf q}$.

In \cite{gurlebeck1997quaternionic}, G\"{u}rlebeck give the complex-valued norm in $\mathbb{H(C)}$.
Let $\mathbf x\in \mathbb{H(C)}$, $\mathbf{a}, \mathbf{b}\in \mathbb{H(R)}$, $a_{n}, b_{n}\in\mathbb{R}(n=0,1,2,3)$, $\mathbf x=\mathbf{a}+I\mathbf{b}$, where $\mathbf{a}=a_0+a_1\mathbf{i}+a_2\mathbf{j}+a_3\mathbf{k}=a_0+\underline{\mathbf a}$, $\mathbf{b}=b_0+b_1\mathbf{i}+b_2\mathbf{j}+b_3\mathbf{k}=b_0+\underline{\mathbf b}$, and $I$ be the complex unit. Then we adopt the symbol $\|\mathbf x\|_{\mathbb{C}}$ to denote the complex-valued norm of  the element $\mathbf x$, which has the following form
\begin{equation}\label{complex magnitude}
\|\mathbf x\|_{\mathbb{C}}^2:=\mathbf x\overline{\mathbf x}=|\mathbf{a}|^2-|\mathbf{b}|^2+2I[a_0b_0+(\underline{\mathbf a},\ \underline{\mathbf b})],
\end{equation}
where
$|\mathbf{a}|^2:=\mathbf a\overline{\mathbf a}$,  $|\mathbf{b}|^2:=\mathbf b\overline{\mathbf b}$, $\overline{\mathbf a}=:a_0-\underline{\mathbf a}$, $\overline{\mathbf b}=:b_0-\underline{\mathbf b}$. In general, we use ${\mathbf x}^{-1}:=\frac{\overline{\mathbf x}}{\|\mathbf x\|^2_{\mathbb{C}}}$ to denote the inverse of $\mathbf x$, where $\|\mathbf x\|_{\mathbb{C}}\neq0$.

Besides, G\"{u}rlebeck give the real-valued norm in $\mathbb{H(C)}$. Let ${\mathbf x}\in \mathbb{H(C)}$,  the real-valued norm $\|{\mathbf x}\|$ has the following form
\begin{equation}\label{norm}
\|\mathbf x\|^4=|\|\mathbf x\|^2_{\mathbb{C}}|^2.
\end{equation}
It's worth noting that the real-valued norm satisfies $\|\mathbf x \mathbf y\|=\|\mathbf x\|\|\mathbf y\|$. In this paper, we use $\|\cdot\|$ to represent real-valued norm of biquaternion as above defined unless we give a special explanation.

According to \cite{biq1,biq2}, the overall description of high-dimensional data by using biquaternion can effectively represent its inherent properties.
In this paper, in order to better distinguish domains, we use plain fonts to represent real or complex numbers, and use bold to represent real quaternions or biquaternions, unless we give a special explanation.
In addition, we use $\mathbf{i},\mathbf{j},\mathbf{k}$ to represent the imaginary unit of a quaternion number, noting that they are commutative with the complex unit $I$. For more details, please refer to \cite{Sangwine, gurlebeck1997quaternionic}.
\subsection{Biquaternion geometry sequence}
Before disscussing the definition of the Z transform in the $\mathbb{H(C)}$ domain, we first introduce the  convergence of biquaternion sequence.

An infinite sequence
\begin{equation}\label{sequence }
\mathbf{f}_{0}, \mathbf{f}_{1}, \cdots, \mathbf{f}_{n},  \cdots
\end{equation}
of biquaternion  has a limit $\mathbf{f}$, $\mathbf{f}\in \mathbb{H(C)}$, if, for each positive number $\varepsilon$, there exists a
positive integer $n_0$ such that
\begin{equation}
\| \mathbf{f}_{n}-\mathbf{f} \|<\varepsilon,
\end{equation}
whenever $n>n_0$. The sequence (\ref{sequence }) can have at most one limit. That is, a limit $\mathbf{f}$ is unique if it exists. When that limit exists, the sequence is said to converge to $\mathbf{f}$ ; and we write
\begin{equation}
\lim_{n\rightarrow\infty} \mathbf{f}_{n}=\mathbf{f}.
\end{equation}

Let $\mathbf{f}_{n}\in \mathbb{H(C)}$, $(n=1,2,\ldots)$. If the sequence
\begin{equation}\label{partial}
\mathbf{S}_N={\sum_{n=0}^N}\mathbf{f}_{n}=\mathbf{f}_{1}+\mathbf{f}_{2}+\cdots+\mathbf{f}_{N}
\end{equation}
of patital sums convergence to $\mathbf{S}$, $\mathbf{S}\in \mathbb{H(C)}$. Then we say the infinite sequence
\begin{equation}\label{series}
{\sum_{n=0}^\infty}\mathbf{f}_{n}=\mathbf{f}_{1}+\mathbf{f}_{2}+\cdots+\mathbf{f}_{n}+\cdots
\end{equation}
convergence to the sum $\mathbf{S}$, and we write
\begin{equation}\label{infty}
{\sum_{n=0}^\infty}\mathbf{f}_{n}=\mathbf{S}.
\end{equation}
When a sequence does not convergence, we say that it diverges. By direct calculate, we have the following proposition.

\begin{proposition}\label{theorem}
	Suppose $\mathbf{f}=\{\mathbf{f}_{n}\}_{n=0}^{\infty}$, $\mathbf{f}_{n}, \mathbf{S}\in \mathbb{H(C)}$, and $\mathbf{f}_n=m_n+\mathbf{i}w_n+\mathbf{j}x_n+\mathbf{k}y_n$, $\mathbf{S}=M+\mathbf{i}W+\mathbf{j}X+\mathbf{k}Y$, then
	\begin{equation}\label{1}
	{\sum_{n=0}^\infty}\mathbf{f}_{n}=\mathbf{S}.
	\end{equation}
	If and only if
	\begin{equation}\label{2}
	{\sum_{n=0}^\infty}m_{n}=M,
	{\sum_{n=0}^\infty}w_{n}=W,
	{\sum_{n=0}^\infty}x_{n}=X,
	\ {\sum_{n=0}^\infty}y_{n}=Y.
	\end{equation}
\end{proposition}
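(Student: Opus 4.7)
The plan is to turn the biquaternion statement into four scalar statements by decomposing the partial sums along the basis $\{1,\mathbf{i},\mathbf{j},\mathbf{k}\}$ and then relating convergence in $\|\cdot\|$ to coordinate-wise convergence in $\mathbb{C}$.

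First I would write
\begin{equation*}
\mathbf{S}_N=\sum_{n=0}^{N}\mathbf{f}_{n}=M_N+\mathbf{i}W_N+\mathbf{j}X_N+\mathbf{k}Y_N,
\end{equation*}
where $M_N:=\sum_{n=0}^{N}m_n$, $W_N:=\sum_{n=0}^{N}w_n$, $X_N:=\sum_{n=0}^{N}x_n$, $Y_N:=\sum_{n=0}^{N}y_n$ are the complex-valued partial sums of the four component series. Subtracting the conjectured limit gives $\mathbf{S}_N-\mathbf{S}=(M_N-M)+\mathbf{i}(W_N-W)+\mathbf{j}(X_N-X)+\mathbf{k}(Y_N-Y)$, so the entire proposition reduces to showing that $\|\mathbf{S}_N-\mathbf{S}\|\rightarrow 0$ holds if and only if each of the four complex differences tends to zero.

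Next I would establish the two estimates behind this equivalence. For the direction \emph{componentwise $\Rightarrow$ norm}, I would apply the explicit formulas (\ref{complex magnitude}) and (\ref{norm}) by splitting $\mathbf{S}_N-\mathbf{S}=\mathbf{a}+I\mathbf{b}$ with $\mathbf{a},\mathbf{b}\in\mathbb{H(R)}$, then bound the cross term by the Cauchy--Schwarz inequality $[a_0b_0+(\underline{\mathbf{a}},\underline{\mathbf{b}})]^{2}\leq|\mathbf{a}|^{2}|\mathbf{b}|^{2}$ to obtain a clean estimate of the shape $\|\mathbf{q}\|^{2}\leq\sum_{j=0}^{3}|q_j|^{2}$ for any $\mathbf{q}=q_0+q_1\mathbf{i}+q_2\mathbf{j}+q_3\mathbf{k}$. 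This immediately forces $\|\mathbf{S}_N-\mathbf{S}\|\rightarrow 0$ whenever each complex-valued component series converges. For the reverse direction, I would identify $\mathbb{H(C)}$ with $\mathbb{R}^{8}$ by further splitting every complex coordinate into its real and imaginary parts, and then appeal to the equivalence of norms on a finite-dimensional real vector space to recover the four coordinate limits from $\|\mathbf{S}_N-\mathbf{S}\|\rightarrow 0$.

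The principal obstacle will be this second direction, because the real-valued norm defined by (\ref{norm}) has non-trivial isotropic elements; for instance $\|1+I\mathbf{i}\|=0$ although $1+I\mathbf{i}\neq 0$. Thus $\|\cdot\|$ is only a seminorm on $\mathbb{H(C)}$, and the implication from $\|\mathbf{S}_N-\mathbf{S}\|\rightarrow 0$ to coordinate-wise convergence is not as automatic as in the real quaternion setting. Resolving this cleanly---either by strengthening the notion of convergence so that the degenerate directions of the seminorm are also controlled, or by carefully invoking the underlying $\mathbb{R}^{8}$ norm-equivalence so that the projections $\mathbf{q}\mapsto q_j$ are genuinely continuous---is the step that would require the most care in a fully rigorous write-up.
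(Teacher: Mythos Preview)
The paper does not actually supply a proof of this proposition: the only justification it gives is the sentence ``By direct calculate, we have the following proposition'' immediately preceding the statement. So there is no argument in the paper to compare your strategy against; your decomposition into the four complex component series and the reduction to a coordinate-wise convergence statement is exactly what ``direct calculation'' would amount to, and for the implication (\ref{2}) $\Rightarrow$ (\ref{1}) your outline is entirely adequate.

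Where you go beyond the paper is in flagging the converse, and your concern is not merely a technicality to be cleaned up---it is a genuine defect in the paper's setup that the authors do not acknowledge. With the real-valued ``norm'' of (\ref{norm}) one indeed has $\|1+I\mathbf{i}\|=0$, so $\|\cdot\|$ is only a seminorm on $\mathbb{H(C)}$. Consequently the implication (\ref{1}) $\Rightarrow$ (\ref{2}) is \emph{false} as the paper has defined convergence: for instance the constant sequence $\mathbf{S}_N=1+I\mathbf{i}$ satisfies $\|\mathbf{S}_N-0\|=0$ for every $N$, yet its scalar component does not tend to $0$. The paper's assertion just above the proposition that ``a limit $\mathbf{f}$ is unique if it exists'' is likewise false for this seminorm. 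Neither of your suggested fixes---``strengthening the notion of convergence'' or ``invoking the underlying $\mathbb{R}^{8}$ norm-equivalence''---can rescue the statement without changing the definition of convergence itself, since the projections $\mathbf{q}\mapsto q_j$ are simply not continuous with respect to $\|\cdot\|$. In short, you have correctly located a gap that the paper's ``direct calculation'' glosses over; a rigorous version of Proposition~\ref{theorem} would need to replace $\|\cdot\|$ by a genuine norm (for example the Euclidean norm on $\mathbb{R}^8\cong\mathbb{H(C)}$) in the definition of sequential convergence.
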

\subsection{Biquaternion Z transform}
In this section, we define the Z transform in the $\mathbb{H(C)}$ domain. Firstly, we introduce the biquaternion geometric sequence as follows.
\begin{lemma}\label{theorem1}
	Let $\mathbf{y}\in \mathbb{H(C)}$, we have
	\begin{equation}\label{e1}
	{\sum_{n=0}^\infty}\mathbf{y}^{n}=(1-\mathbf{y})^{-1},  \quad \|\mathbf{y}\|<1.
	\end{equation}
\end{lemma}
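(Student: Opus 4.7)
The plan is to adapt the classical geometric-series derivation to the biquaternion setting, relying on the multiplicativity of the real-valued norm, $\|\mathbf{y}^n\| = \|\mathbf{y}\|^n$, to drive the tail to zero, and on Proposition \ref{theorem} to pass from convergence of the four complex coordinate sequences to convergence of the full biquaternion series.

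First, I would introduce the partial sum $\mathbf{S}_N := \sum_{n=0}^N \mathbf{y}^n$ and derive the telescoping identity
\begin{equation*}
\mathbf{S}_N (1-\mathbf{y}) = (1-\mathbf{y})\mathbf{S}_N = 1 - \mathbf{y}^{N+1}.
\end{equation*}
This is a purely algebraic computation: $\mathbf{y}$ commutes with itself and with the scalar $1$, and multiplication in $\mathbb{H(C)}$ is associative, so the familiar complex derivation goes through unchanged.

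Next, assuming $1-\mathbf{y}$ is invertible, I would write $(1-\mathbf{y})^{-1} = \overline{1-\mathbf{y}}/\|1-\mathbf{y}\|_{\mathbb{C}}^2$ as in the preliminaries and right-multiply the telescoping identity to obtain
\begin{equation*}
\mathbf{S}_N = (1-\mathbf{y})^{-1} - \mathbf{y}^{N+1}(1-\mathbf{y})^{-1}.
\end{equation*}
By the multiplicative law, $\|\mathbf{y}^{N+1}\| = \|\mathbf{y}\|^{N+1}\to 0$ since $\|\mathbf{y}\|<1$; working with the four complex coordinates of $\mathbf{y}^{N+1}(1-\mathbf{y})^{-1}$ and invoking Proposition \ref{theorem}, this forces $\mathbf{S}_N \to (1-\mathbf{y})^{-1}$.

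The main obstacle I expect is establishing the invertibility of $1-\mathbf{y}$ under the single hypothesis $\|\mathbf{y}\|<1$. Because $\mathbb{H(C)}$ contains nonzero null elements (with $\|\mathbf{x}\|_{\mathbb{C}}=0$, and hence $\|\mathbf{x}\|=0$), the real-valued norm is merely a seminorm in the functional-analytic sense, and the standard Neumann-series argument for Banach algebras does not apply directly. My intended route is by contraposition: if $\|1-\mathbf{y}\|_{\mathbb{C}}=0$, then $1-\mathbf{y}$ annihilates some nonzero $\mathbf{z}$, so $\mathbf{z}=\mathbf{y}\mathbf{z}$; applying $\|\cdot\|$ and multiplicativity yields $\|\mathbf{z}\|=\|\mathbf{y}\|\|\mathbf{z}\|$, which forces $\|\mathbf{y}\|=1$ whenever $\|\mathbf{z}\|\neq 0$, contradicting the hypothesis. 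The residual case in which every such annihilator is itself null is the genuinely delicate point and may require a supplementary condition or a blockwise argument using the central idempotents $\tfrac{1}{2}(1\pm I\mathbf{i})$ of $\mathbb{H(C)}$.
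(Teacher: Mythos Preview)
Your overall plan---partial sums, the telescoping identity $(1-\mathbf{y})\mathbf{S}_N=1-\mathbf{y}^{N+1}$, then multiplicativity of $\|\cdot\|$ to kill the remainder---is exactly the route the paper takes. The paper writes $\mathbf{S}_N(\mathbf{y})=\sum_{n=0}^{N-1}\mathbf{y}^n$, obtains $(1-\mathbf{y})\mathbf{S}_N=1-\mathbf{y}^N$, sets $\rho_N=(1-\mathbf{y})^{-1}\mathbf{y}^N$, and concludes from $\|\rho_N\|=\|(1-\mathbf{y})^{-1}\|\,\|\mathbf{y}\|^N\to 0$. So at the level of strategy there is nothing to separate the two.

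Where you go further than the paper is in flagging the invertibility of $1-\mathbf{y}$. The paper simply asserts ``when $\|\mathbf{y}\|<1$, $(1-\mathbf{y})\neq 0$, $(1-\mathbf{y})^{-1}$ is well defined'' and moves on; it never justifies the passage from \emph{nonzero} to \emph{invertible}, which is of course not automatic in $\mathbb{H(C)}$. Your caution is entirely warranted, and in fact the ``residual case'' you isolate cannot be closed: take $\mathbf{y}=\tfrac{1}{2}(1+I\mathbf{i})$. Then $\mathbf{y}\overline{\mathbf{y}}=\tfrac{1}{4}(1+I\mathbf{i})(1-I\mathbf{i})=\tfrac{1}{4}(1-I^2\mathbf{i}^2)=0$, so $\|\mathbf{y}\|=0<1$; but $1-\mathbf{y}=\tfrac{1}{2}(1-I\mathbf{i})$ is likewise null and hence not invertible. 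Worse, $\mathbf{y}$ is idempotent ($\mathbf{y}^2=\mathbf{y}$), so $\sum_{n=0}^{N}\mathbf{y}^n=1+N\mathbf{y}$ diverges outright. Thus the lemma as stated is false without an additional hypothesis (for instance $\|1-\mathbf{y}\|_{\mathbb{C}}\neq 0$), and your contraposition argument breaks down precisely where you anticipated. You have correctly identified a genuine gap that the paper's proof glosses over.
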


\begin{proof}
	Firstly, let $\mathbf{S}_N\in \mathbb{H(C)}$.  We use $\mathbf{S}_N(\mathbf{y})$ to denote the partial sums of ${\sum_{n=0}^\infty}\mathbf{y}^{n}$, that is
	\begin{equation}\label{8}
	\mathbf{S}_N(\mathbf{y})={\sum_{n=0}^{N-1}}\mathbf{y}^{n}=1+\mathbf{y}+\mathbf{y}^2+\cdots+\mathbf{y}^{N-1}.
	\end{equation}
	Then we multiply  Eq. (\ref{8}) by $\mathbf{y}$ on the left-hand side,
	\begin{equation}\label{9}
	\mathbf{y}\mathbf{S}_N(\mathbf{y})=\mathbf{y}{\sum_{n=0}^{N-1}}\mathbf{y}^{n}=\mathbf{y}+\mathbf{y}^2+\cdots+\mathbf{y}^{N}.
	\end{equation}
	Using Eq. (\ref{8})-(\ref{9})
	\begin{equation}\label{8-9}
	(1-\mathbf{y})\mathbf{S}_N(\mathbf{y})=1-\mathbf{y}^{N}.
	\end{equation}
	Thus we know
	\begin{equation}\label{10}
	\mathbf{S}_N(\mathbf{y})=(1-\mathbf{y})^{-1}(1-\mathbf{y}^{N}).
	\end{equation}
	When $\|\mathbf{y}\|<1$, $(1-\mathbf{y})\neq0$, $(1-\mathbf{y})^{-1}$ is well defined. Define
	\begin{equation}\label{11}
	\mathbf{S}(\mathbf{y}):=(1-\mathbf{y})^{-1},
	\end{equation}
	using Eq. (\ref{10})-(\ref{11})
	\begin{equation}\label{12}
	\begin{aligned}
	\mathbf{\rho}_N(\mathbf{y}):=&\mathbf{S}(\mathbf{y})-\mathbf{S}_N(\mathbf{y})\\
	=&(1-\mathbf{y})^{-1}-(1-\mathbf{y})^{-1}(1-\mathbf{y}^{N})\\
	=&(1-\mathbf{y})^{-1}\mathbf{y}^{N}.
	\end{aligned}
	\end{equation}
	Thus
	\begin{equation}\label{13}
	\|\mathbf{\rho}_N(\mathbf{y})\|=\|(1-\mathbf{y})^{-1}\mathbf{y}^{N}\|=\|(1-\mathbf{y})^{-1}\|\|\mathbf{y}\|^{N},
	\end{equation}
	and it is clear from this that the remainders $\rho_N(\mathbf{y})$ tends to zero when $\|\mathbf{y}\|<1$ but not when $\|\mathbf{y}\|\geq1$. Therefore, the proof of equation (\ref{e1}) is completed.
\end{proof}

As an immediate consequence of Proposition \ref{theorem} and Lemma \ref{theorem1}, we present the definition of biquaternion Z transform in the following.

\begin{definition}(Biquaternion Z transform)
	Let $\mathbf{f}=\{\mathbf{f}_n\}_{n=0}^\infty$, $\mathbf{f}_n, \mathbf{x} \in \mathbb{H(C)}$.
	The Z transform $\mathcal{X}[\mathbf{f}](\mathbf{x})$ of $\mathbf{f}$ is defined by
	\begin{equation}
	\mathcal{X}[\mathbf{f}](\mathbf{x}):=\mathbf{f}_{0}+\mathbf{f}_{1}{\mathbf{x}}^{-1}+\cdots+\mathbf{f}_{n}{\mathbf{x}}^{-n}+\cdots=
	{\sum_{n=0}^\infty}\mathbf{f}_{n}{\mathbf{x}}^{-n}.
	\end{equation}
	The region of convergence (ROC) is  the set of all $\mathbf{x}$ values that make the defined sequence converge, that is $\|\mathbf{x}\|>\sigma_f$. Among them, $\sigma_f~(0\leq{\|\sigma_f\|}<\infty)$ is defined as the radius of convergence of the Z transform of $\mathbf{f}$.
\end{definition}

\begin{remark}\label{de}
	\begin{enumerate}[(i)]
		\item
		If $\mathbf{f}_n=1$ for all $n\geq0$, the Z transform $\mathcal{X}[\mathbf{f}](\mathbf{x})$  of sequence $\{\mathbf{f}_n\}^\infty_{n=0}$ in $\mathbb{H(C)}$ is defined by
		\begin{equation}\label{e2}
		\mathcal{X}[\mathbf{f}](\mathbf{x}):={\sum_{n=0}^\infty}\mathbf{x}^{-n}=(1-\mathbf{x}^{-1})^{-1},
		\end{equation}
		which is convergent for all $\mathbf{x}$ such that $\|\mathbf{x}\|>1$. More precise, the ROC of $\mathcal{X}[\mathbf{f}](\mathbf{x})$ is $\|\mathbf{x}\|>1$.
		\item
		When $\mathbf{f}_n\in \mathbb{H(C)}(n\geq0)$, ${x}\in \mathbb{C}$. Let $\mathbf{f}_n=\mathbf{p}^n$, where $\mathbf{p}$ is a nonzero biquaternion number. The Z transform $\mathcal{X}[\mathbf{f}]({x})$ of sequence $\{\mathbf{f}_n\}^\infty_{n=0}$ is defined by
		\begin{equation}\label{definition}
        \begin{aligned}
		\mathcal{X}[\mathbf{f}]({x}):=&{\sum_{n=0}^\infty}\mathbf{f}_{n}{x}^{-n}=(1-\mathbf{p} {x}^{-1})^{-1},\\ &|{x}|>\|\mathbf{p}\|,
        \end{aligned}
		\end{equation}
		$\mathcal{X}[\mathbf{f}]({x})$ is convergence for all ${x}$ such that $|{x}|>\|\mathbf{p}\|$. More precise, the ROC of $\mathcal{X}[\mathbf{f}]({x})$ is $|{x}|>\|\mathbf{p}\|$.
		Define $\sigma_f$ to be the radius of convergence of the Z transform of $\mathbf{f}_n$. Here $\sigma_f:=\|\mathbf{p}\|$.
		\item
		When ${f}_n\in \mathbb{C}(n\geq0)$, $\mathbf{x}\in \mathbb{H(C)}$. Let ${f}_n={p}^n$, where ${p}$ is a nonzero complex number. The Z transform $\mathcal{X}[{f}](\mathbf{x})$ of sequence $\{{f}_n\}^\infty_{n=0}$ is defined by
		\begin{equation}\label{definition}
        \begin{aligned}
		\mathcal{X}[{f}](\mathbf{x}):=&{\sum_{n=0}^\infty}{f}_{n}\mathbf{x}^{-n}=(1-{p} \mathbf{x}^{-1})^{-1},\\ &\|\mathbf{x}\|>|{p}|,
		\end{aligned}
         \end{equation}
		$\mathcal{X}[{f}](\mathbf{x})$ is convergence for all $\mathbf{x}$ such that $\|\mathbf{x}\|>|{p}|$. More precise, the ROC of $\mathcal{X}[{f}](\mathbf{x})$ is $\|\mathbf{x}\|>|{p}|$.
		\item
		When ${x}, p\in \mathbb{C}$, and $p\neq0$. The Z transform $\mathcal{X}[{f}]({x})$ of the sequence ${f}_n=p^n~(n\geq0)$ is defined by
		\begin{equation}\label{classicalZ}
        \begin{aligned}
		\mathcal{X}[{f}]({x}):=&{\sum_{n=0}^\infty}{f}_{n}{x}^{-n}=(1-{p}{x}^{-1})^{-1},\\
& |{x}|>|{p}|,
        \end{aligned}
		\end{equation}
		where $|x|$ denote the absolute value of $x$.
	\end{enumerate}
\end{remark}

\section{Properties}\label{P}
Similar to complex Z transform, the biquaternion Z transform has many useful properties. First of all, the following theorem gives some calculation rules for Z transformation.
For the sake of simplicity, we have introduced some symbols for numerical sequence operations.
Here, $\sigma_f$ represents the radius of convergence of the Z transform of $\mathbf{f}_n$, which indicates that the sequence is convergent for $\|\mathbf{x}\|>\sigma_f$ and divergent for $\|\mathbf{x}\|<\sigma_f$.

\begin{theorem}\label{th1}
	Let $\mathbf{f}=\{\mathbf{f}_n\}_{n=0}^\infty$, $\mathbf{g}=\{\mathbf{g}_n\}_{n=0}^\infty$, $ \mathbf{f}_n, \mathbf{g}_n\in \mathbb{H(C)}$. $\mathcal{X}[\mathbf{f}]$ and $\mathcal{X}[\mathbf{g}]$ denote the biquaternion Z transform of $\mathbf{f} $ and $\mathbf{g}$, respectively.
	\begin{enumerate}[(I)]
		\item
		For all constants $\mathbf{c}_1, \mathbf{c}_2 \in \mathbb{H(C)}$, we have
		\begin{equation}\label{linear1}
        \begin{aligned}
		\mathcal{X}[\mathbf{c}_1\mathbf{f}+\mathbf{c}_2\mathbf{g}](\mathbf{x})
		=&\mathbf{c}_1\mathcal{X}[\mathbf{f}](\mathbf{x})+\mathbf{c}_2\mathcal{X}[\mathbf{g}](\mathbf{x}),\\ &\|\mathbf{x}\|>\max(\sigma_f,\sigma_g).
        \end{aligned}
		\end{equation}
		\begin{equation}\label{linear2}
        \begin{aligned}
		\mathcal{X}[\mathbf{f}\mathbf{c}_1+\mathbf{g}\mathbf{c}_2](\mathbf{x})
		=&\mathcal{X}[\mathbf{f}](\mathbf{x})\mathbf{c}_1+\mathcal{X}[\mathbf{g}](\mathbf{x})\mathbf{c}_2,\\ &\|\mathbf{x}\|>\max(\sigma_f,\sigma_g).
		\end{aligned}
        \end{equation}
		\begin{equation}\label{linear3}
        \begin{aligned}
		\mathcal{X}[\mathbf{c}_1\mathbf{f}+\mathbf{g}\mathbf{c}_2](\mathbf{x})
		=&\mathbf{c}_1\mathcal{X}[\mathbf{f}](\mathbf{x})+\mathcal{X}[\mathbf{g}]\mathbf{c}_2(\mathbf{x}),\\ &\|\mathbf{x}\|>\max(\sigma_f,\sigma_g).
        \end{aligned}
		\end{equation}
		\item  Let $\mathbf{q}$ be a nonzero biquaternion number and $\mathbf{g}_n=\mathbf{f}_n\mathbf{q}^n$, $n=0,1,2,\cdots$. If $\mathbf{q}\mathbf{x}=\mathbf{x}\mathbf{q}$, then
		\begin{equation}
		\mathcal{X}[\mathbf{g}](\mathbf{x})=\mathcal{X}[\mathbf{f}](\mathbf{q}^{-1}\mathbf{x}), \quad  \|\mathbf{x}\|>{\sigma_f}{\|\mathbf{q}\|}.
		\end{equation}
		\item Let $k>0$ be a fixed integer and $\mathbf{g}_n=\mathbf{f}_{n+k}$, $n=0,1,2,\cdots$, then
		\begin{equation}\label{th3}
		\begin{aligned}
		\mathcal{X}[\mathbf{g}](\mathbf{x})=&( \mathcal{X}[\mathbf{f}](\mathbf{x})-\mathbf{f}_0-\mathbf{f}_1\mathbf{x}^{-1}\\
&-\mathbf{f}_2\mathbf{x}^{-2}-\cdots-\mathbf{f}_{k-1}\mathbf{x}^{-(k-1)})\mathbf{x}^{k}\\ =&\mathcal{X}[\mathbf{f}](\mathbf{x})\mathbf{x}^{k}-\mathbf{f}_0\mathbf{x}^{k}\\
&-\mathbf{f}_1\mathbf{x}^{k-1}-\cdots-\mathbf{f}_{k-1}\mathbf{x},\\
& \|\mathbf{x}\|>\sigma_f.
		\end{aligned}
		\end{equation}
		\item Conversely, if $k$ is a positive integer and $\mathbf{g}_n=\mathbf{f}_{n-k}$ for $n\geq k$ and $\mathbf{g}_n=0$ for $n<k$, then $\mathcal{X}[\mathbf{g}](\mathbf{x})=\mathcal{X}[\mathbf{f}](\mathbf{x})\mathbf{x}^{-k}$. 
		\item Let $x$ be a complex-valued number and $\mathbf{g}_n=n\mathbf{f}_n$,$n=0,1,2,\cdots$,then
		\begin{equation}\label{33}
		\mathcal{X}[\mathbf{g}](x)=-x\frac{d}{dx}[\mathcal{X}[\mathbf{f}](x)], \quad |x|>\sigma_f.
		\end{equation}
	\end{enumerate}
\end{theorem}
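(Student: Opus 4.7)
The plan for all five parts is to expand each transform by its defining series and manipulate it under the assumed region of convergence, relying on the earlier fact that the real-valued norm is multiplicative ($\|\mathbf{x}\mathbf{y}\|=\|\mathbf{x}\|\|\mathbf{y}\|$) and on the componentwise convergence characterization in Proposition \ref{theorem}. For part (I), I would just write
\[
\mathcal{X}[\mathbf{c}_1\mathbf{f}+\mathbf{c}_2\mathbf{g}](\mathbf{x})=\sum_{n=0}^{\infty}(\mathbf{c}_1\mathbf{f}_n+\mathbf{c}_2\mathbf{g}_n)\mathbf{x}^{-n},
\]
and pull $\mathbf{c}_1,\mathbf{c}_2$ out on the correct side (left, right, or mixed, matching each of the three identities) using the split-series rule that justifies the addition of two convergent series whose individual radii are at most $\max(\sigma_f,\sigma_g)$. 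The only subtlety is noting that the biquaternion constants, unlike complex scalars, must be written on the same side from which they were factored; no commutativity of $\mathbf{c}_i$ with $\mathbf{x}$ is invoked.

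For part (II), I would substitute $\mathbf{g}_n=\mathbf{f}_n\mathbf{q}^n$ directly:
\[
\mathcal{X}[\mathbf{g}](\mathbf{x})=\sum_{n=0}^{\infty}\mathbf{f}_n\mathbf{q}^n\mathbf{x}^{-n}.
\]
The key step, and I expect the subtlest point in the theorem, is converting $\mathbf{q}^n\mathbf{x}^{-n}$ into $(\mathbf{q}\mathbf{x}^{-1})^n=(\mathbf{x}^{-1}\mathbf{q})^n=(\mathbf{q}^{-1}\mathbf{x})^{-n}$. This requires that $\mathbf{q}$ and $\mathbf{x}$ commute; from $\mathbf{q}\mathbf{x}=\mathbf{x}\mathbf{q}$ I would first deduce $\mathbf{q}\mathbf{x}^{-1}=\mathbf{x}^{-1}\mathbf{q}$ by multiplying appropriately on each side, and then interleave factors in $\mathbf{q}\cdots\mathbf{q}\,\mathbf{x}^{-1}\cdots\mathbf{x}^{-1}$ to collapse the product. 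The ROC follows from $\|\mathbf{q}^{-1}\mathbf{x}\|=\|\mathbf{x}\|/\|\mathbf{q}\|>\sigma_f$.

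For parts (III) and (IV) I would re-index the defining series. In (III), writing $m=n+k$ gives
\[
\mathcal{X}[\mathbf{g}](\mathbf{x})=\sum_{m=k}^{\infty}\mathbf{f}_m\mathbf{x}^{-m+k}=\Bigl(\sum_{m=k}^{\infty}\mathbf{f}_m\mathbf{x}^{-m}\Bigr)\mathbf{x}^{k},
\]
and adding and subtracting the missing initial terms yields the two stated forms; crucially the factor $\mathbf{x}^k$ stays on the right, so no commutativity assumption is needed. Part (IV) is the symmetric calculation with $m=n-k$, observing that the $\mathbf{g}_n=0$ convention removes the initial block. Convergence is unaffected in both cases since only finitely many terms are being shifted.

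For part (V), since $x$ is complex-valued the series $\mathcal{X}[\mathbf{f}](x)=\sum_{n=0}^\infty\mathbf{f}_n x^{-n}$ is a Laurent power series in the commutative variable $x^{-1}$, with each coefficient in $\mathbb{H(C)}$. Invoking Proposition \ref{theorem} I would split it into four complex Laurent series, differentiate each term-by-term inside the annulus $|x|>\sigma_f$ by the standard complex analysis fact, and reassemble:
\[
\frac{d}{dx}\mathcal{X}[\mathbf{f}](x)=-\sum_{n=0}^{\infty}n\mathbf{f}_n x^{-n-1},
\]
which after multiplication by $-x$ is exactly $\mathcal{X}[\mathbf{g}](x)$. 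Overall I expect no serious obstacles beyond the commutativity juggling in (II); the rest is bookkeeping in the series and invocation of the norm multiplicativity.
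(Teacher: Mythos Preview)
Your proposal is correct and follows essentially the same route as the paper: direct series manipulation for (I)--(IV) and term-by-term differentiation for (V). If anything, you are slightly more careful than the paper in two places---explicitly deriving $\mathbf{q}\mathbf{x}^{-1}=\mathbf{x}^{-1}\mathbf{q}$ from the commutativity hypothesis in (II), and justifying the termwise differentiation in (V) via the componentwise decomposition of Proposition~\ref{theorem}---whereas the paper simply writes down the one-line computations without dwelling on these points.
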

\begin{proof}
	For (I), we only give the proof of Eq.(\ref{linear1}). Eq.(\ref{linear2}) and Eq.(\ref{linear3}) can be derived by the similar manner. Since $\mathcal{X}[\mathbf{f}](\mathbf{x})={\sum_{n=0}^\infty}\mathbf{f}_{n}\mathbf{x}^{-n}$ and $\mathcal{X}[\mathbf{g}](\mathbf{x})={\sum_{n=0}^\infty}\mathbf{g}_{n}\mathbf{x}^{-n}$, we have
	\begin{equation}
	\begin{aligned}
	\mathcal{X}[\mathbf{c}_1\mathbf{f}+\mathbf{c}_2\mathbf{g}](\mathbf{x})
	=&{\sum_{n=0}^\infty}\mathbf{c}_1\mathbf{f}_{n}\mathbf{x}^{-n}+{\sum_{n=0}^\infty}\mathbf{c}_2\mathbf{g}_{n}\mathbf{x}^{-n}\\
	=&\mathbf{c}_1{\sum_{n=0}^\infty}\mathbf{f}_{n}\mathbf{x}^{-n}+\mathbf{c}_2{\sum_{n=0}^\infty}\mathbf{g}_{n}\mathbf{x}^{-n}\\
	=&\mathbf{c}_1\mathcal{X}[\mathbf{f}](\mathbf{x})+\mathbf{c}_2\mathcal{X}[\mathbf{g}](\mathbf{x}),\\ &\|\mathbf{x}\|>\max(\sigma_f,\sigma_g).
	\end{aligned}
	\end{equation}
	For (II) we find
	\begin{equation}\label{pr2}
	\begin{aligned} \mathcal{X}[\mathbf{g}](\mathbf{x})=&{\sum_{n=0}^\infty}\mathbf{g}_{n}\mathbf{x}^{-n}\\
    =&{\sum_{n=0}^\infty}\mathbf{f}_n\mathbf{q}^n\mathbf{x}^{-n}\\
	=&{\sum_{n=0}^\infty}\mathbf{f}_n(\mathbf{q}^{-1}\mathbf{x})^{-n}\\
    =&\mathcal{X}[\mathbf{f}](\mathbf{q}^{-1}\mathbf{x}).
	\end{aligned}
    \end{equation}
	Since $\|\mathbf{q}^{-1}\mathbf{x}\|>{\sigma_f}$, therefore $\|\mathbf{x}\|>{\sigma_f}{\|\mathbf{q}\|}$.\\
	For (III),
	\begin{equation}\label{pr3}
	\begin{aligned}
	\mathcal{X}[\mathbf{g}](\mathbf{x})
    =&{\sum_{n=0}^\infty}\mathbf{g}_{n}\mathbf{x}^{-n}\\
    =&{\sum_{n=0}^\infty}\mathbf{f}_{n+k}\mathbf{x}^{-n}\\
	=&{\sum_{n=0}^\infty}\mathbf{f}_{n+k}\mathbf{x}^{-(n+k)}\mathbf{x}^{k}\\
	=&( \mathcal{X}[\mathbf{f}](\mathbf{x})-\mathbf{f}_0-\mathbf{f}_1\mathbf{x}^{-1}\\
&-\mathbf{f}_2\mathbf{x}^{-2}-\cdots-\mathbf{f}_{k-1}\mathbf{x}^{-(k-1)})\mathbf{x}^{k}\\
	=&\mathcal{X}[\mathbf{f}](\mathbf{x})\mathbf{x}^{k}-{\sum_{n=0}^{k-1}}\mathbf{f}_n\mathbf{x}^{k-n},\quad \|\mathbf{x}\|>\sigma_f.
	\end{aligned}
	\end{equation}
	For (IV),
	\begin{equation}\label{pr4}
	\begin{aligned}
	\mathcal{X}[\mathbf{g}](\mathbf{x})=&{\sum_{n=0}^\infty}\mathbf{g}_{n}\mathbf{x}^{-n}\\
=&{\sum_{n=0}^\infty}\mathbf{f}_{n-k}\mathbf{x}^{-n}\\
	=&{\sum_{n=0}^\infty}\mathbf{f}_{n-k}\mathbf{x}^{-(n-k)}\mathbf{x}^{-k}\\
	=&{\sum_{n=k}^\infty}\mathbf{f}_{n-k}\mathbf{x}^{-(n-k)} \mathbf{x}^{-k}\\
=&\mathcal{X}[\mathbf{f}](\mathbf{x})\mathbf{x}^{-k}
	,\quad \|\mathbf{x}\|>\sigma_f.
	\end{aligned}
	\end{equation}
	For (V), the right-hand side of equation (\ref{33}) can write
	\begin{equation}
	\begin{aligned}
    -x\frac{d}{dx}[\mathcal{X}[\mathbf{f}](x)]
=&-x\frac{d}{dx}({\sum_{n=0}^\infty}\mathbf{f}_{n}x^{-n})\\
=&-x{\sum_{n=0}^\infty}\mathbf{f}_{n} (-n)x^{-n-1}\\
=&{\sum_{n=0}^\infty}(n\mathbf{f}_{n})x^{-n}\\
=& \mathcal{X}[\mathbf{g}](x).
    \end{aligned}
	\end{equation}
	where $\quad |x|>\sigma_f$.
\end{proof}

\begin{theorem}
	Let ${x}\in \mathbb{C}$. If $\mathbf{f}$ and $\mathbf{g}$ are two biquaternion-valued number sequence, $\mathbf{w}$, called the convolution of $\mathbf{f}$ and $g$, by writing
	\begin{equation}\label{convolution}
	\begin{aligned} \mathbf{w}_n=&(\mathbf{f}*\mathbf{g})_n\\
=&{\sum_{k=0}^n}\mathbf{f}_{n-k}\mathbf{g}_k\\
=&{\sum_{k=0}^n}\mathbf{f}_{k}\mathbf{g}_{n-k},  \ n=0,1,2,\cdots.
    \end{aligned}
	\end{equation}
	The z transform $\mathcal{X}[\mathbf{f}](x)$ is convergence in $|x|>\sigma_f$ and $\mathcal{X}[g](x)$ is convergence in $|x|>\sigma_g$, where $x$ is complex valued. Then the convolution of $\mathbf{f}$ and $g$ has Z transform in $|x|>\max(\sigma_f,\sigma_g)$ and
	\begin{equation}
	\mathcal{X}[\mathbf{w}](x)=\mathcal{X}[\mathbf{f}](x)\mathcal{X}[\mathbf{g}](x).
	\end{equation}
\end{theorem}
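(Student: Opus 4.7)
The plan is to expand both transforms as power series in $x^{-1}$, multiply them out, and collect terms according to the exponent to recover the convolution. Two structural facts make this clean: first, $x \in \mathbb{C}$ commutes with every biquaternion (as noted in Section~2.1, since $I$ commutes with $\mathbf{i},\mathbf{j},\mathbf{k}$); and second, the real-valued norm satisfies $\|\mathbf{p}\mathbf{q}\|=\|\mathbf{p}\|\|\mathbf{q}\|$.

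First I would write
\begin{equation*}
\mathcal{X}[\mathbf{f}](x)\mathcal{X}[\mathbf{g}](x)=\Bigl({\sum_{m=0}^\infty}\mathbf{f}_m x^{-m}\Bigr)\Bigl({\sum_{k=0}^\infty}\mathbf{g}_k x^{-k}\Bigr),
\end{equation*}
and, using that the complex scalar $x^{-m}$ commutes with $\mathbf{g}_k$, recast the right-hand side as the double series $\sum_{m,k}\mathbf{f}_m\mathbf{g}_k x^{-(m+k)}$. Reindexing by $n=m+k$ then yields
\begin{equation*}
{\sum_{n=0}^\infty}\Bigl({\sum_{k=0}^n}\mathbf{f}_{n-k}\mathbf{g}_k\Bigr)x^{-n}={\sum_{n=0}^\infty}\mathbf{w}_n x^{-n}=\mathcal{X}[\mathbf{w}](x).
\end{equation*}
The noncommutativity of $\mathbf{f}_m$ and $\mathbf{g}_k$ is harmless here because their left-to-right order is preserved throughout, which matches the definition of $\mathbf{w}_n$ exactly.

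The main obstacle, and essentially the only nontrivial step, is justifying the rearrangement of the double series. For this I would verify absolute convergence in the real-valued norm. Because $|x|>\sigma_f$ and $|x|>\sigma_g$, a standard Abel-type majorant argument within the open region of convergence shows that $\sum_m\|\mathbf{f}_m\||x|^{-m}$ and $\sum_k\|\mathbf{g}_k\||x|^{-k}$ are finite. The multiplicativity of $\|\cdot\|$ then gives
\begin{equation*}
{\sum_{m=0}^\infty}{\sum_{k=0}^\infty}\|\mathbf{f}_m\mathbf{g}_k x^{-(m+k)}\|=\Bigl({\sum_{m=0}^\infty}\|\mathbf{f}_m\||x|^{-m}\Bigr)\Bigl({\sum_{k=0}^\infty}\|\mathbf{g}_k\||x|^{-k}\Bigr)<\infty,
\end{equation*}
so Cauchy-product reindexing of the double sum is legitimate on $|x|>\max(\sigma_f,\sigma_g)$, completing the argument.
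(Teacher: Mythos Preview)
Your argument is essentially the paper's own proof run in reverse: the paper starts from $\mathcal{X}[\mathbf{w}](x)=\sum_{n}\bigl(\sum_{k=0}^{n}\mathbf{f}_{n-k}\mathbf{g}_k\bigr)x^{-n}$, interchanges the order of summation to $\sum_{k}\sum_{n\ge k}$, substitutes $m=n-k$, and factors out $\mathcal{X}[\mathbf{f}](x)$, using exactly the commutativity of the complex scalar $x$ with biquaternion coefficients that you invoke. Your absolute-convergence justification is actually more careful than the paper's (which performs the interchange without comment); the only caveat is that the multiplicative $\|\cdot\|$ of equation~(\ref{norm}) is not a genuine norm on $\mathbb{H}(\mathbb{C})$---it vanishes on zero divisors and need not satisfy the triangle inequality---so the safest way to license the rearrangement is componentwise via Proposition~\ref{theorem}, or equivalently via the Euclidean norm on $\mathbb{C}^4$.
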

\begin{proof}
	From the definition, we have
	\begin{equation}
    \begin{aligned}
	\mathcal{X}[\mathbf{w}](x)=&{\sum_{n=0}^\infty}\left({\sum_{k=0}^n}\mathbf{f}_{n-k}\mathbf{g}_k\right)x^{-n}\\
	=&{\sum_{k=0}^\infty}{\sum_{n=k}^\infty}\mathbf{f}_{n-k}\mathbf{g}_kx^{-n}.
	\end{aligned}
    \end{equation}
	Let $m=n-k$, we can rewrite
	\begin{equation}
    \begin{aligned}
	\mathcal{X}[\mathbf{w}](x)=&
	{\sum_{k=0}^\infty}{\sum_{n=k}^\infty}\mathbf{f}_{n-k}\mathbf{g}_kx^{-(n-k)}x^{-k}\\
	=&{\sum_{k=0}^\infty}\left({\sum_{m=0}^\infty}\mathbf{f}_{m}x^{-m}\right)\mathbf{g}_kx^{-k}.
    \end{aligned}
	\end{equation}
	Because $\mathbf{f}_{m}$ and ${g}_k$ both have z transform, then we get
	\begin{equation}
	\mathcal{X}[\mathbf{w}](x)
	={\sum_{m=0}^\infty}\mathbf{f}_{m}x^{-m}{\sum_{k=0}^\infty}\mathbf{g}_kx^{-k}
	=\mathcal{X}[\mathbf{f}](x)\mathcal{X}[\mathbf{g}](x).
	\end{equation}
	It is clear that since both sequences needs to converge in this domain, the final ROC is $|x|>\max(\sigma_f,\sigma_g)$.
\end{proof}
\begin{table*}
\caption{Properties of z transform}\label{table:p1}
\centering
\begin{tabular}{cccc}
		\toprule
		Property&Function&Z transform& ROC\\
		\midrule
		Left linearity &$\mathbf{c}_1\mathbf{f}+\mathbf{c}_2\mathbf{g}$ &$\mathbf{c}_1\mathcal{X}[\mathbf{f}](\mathbf{x})+\mathbf{c}_2\mathcal{X}[\mathbf{g}](\mathbf{x})$ & $\|\mathbf{x}\|>\max(\sigma_f,\sigma_g)$ \\
		Right linearity &$\mathbf{f}\mathbf{c}_1+\mathbf{g}\mathbf{c}_2$
		&$\mathcal{X}[\mathbf{f}](\mathbf{x})\mathbf{c}_1+\mathcal{X}[\mathbf{g}](\mathbf{x})\mathbf{c}_2 $&$\|\mathbf{x}\|>\max(\sigma_f,\sigma_g)$\\
		Two-side linearity &$\mathbf{c}_1\mathbf{f}+\mathbf{g}\mathbf{c}_2$&
		$\mathbf{c}_1\mathcal{X}[\mathbf{f}](\mathbf{x})+\mathcal{X}[\mathbf{g}](\mathbf{x})\mathbf{c}_2$&$\|\mathbf{x}\|>\max(\sigma_f,\sigma_g)$\\
		$\mathbf{q}^n$-scaling & $\mathbf{f}_n\mathbf{q}^n$ & $\mathcal{X}[\mathbf{f}](\mathbf{q}^{-1}\mathbf{x})$&$\|\mathbf{x}\|>{\sigma_f}{\|\mathbf{q}\|}$\\
		n-scaling &  $n\mathbf{f}_{n}$&  $-x\mathcal{X}[\mathbf{f}]'(x)$&$|x|>\sigma_f,~{x}\in \mathbb{C}$\\
		Shifting I & $\mathbf{f}_{n+k}$ & $\mathcal{X}[\mathbf{f}](\mathbf{x})\mathbf{x}^{k}-{\sum_{n=0}^{k-1}}\mathbf{f}_n\mathbf{x}^{k-n}$&$\|\mathbf{x}\|>\sigma_f$\\
		Shifting II & $\mathbf{f}_{n-k}$ & $\mathcal{X}[\mathbf{f}](\mathbf{x})\mathbf{x}^{-k}$&$\|\mathbf{x}\|>\sigma_f$\\
		Convolution & ${\sum_{k=0}^n}\mathbf{f}_{n-k}g_k$ &$\mathcal{X}[\mathbf{f}](x)\mathcal{X}[g](x)$&$|x|>\max(\sigma_f,\sigma_g),~{x}\in \mathbb{C}$\\
		\bottomrule
	\end{tabular}
\end{table*}

The properties of z transform are summarized in Table \ref{table:p1}. In the following, we give the z transformation of several special functions. 

\begin{example}
	\begin{equation}
	\begin{aligned}
	\mathcal{X}[1](\mathbf{x})=&(1-\mathbf{x}^{-1})^{-1}, \quad \|\mathbf{x}\|>1, \mathbf{x}\in \mathbb{H(C)},\\
	\mathcal{X}[n](\mathbf{x})=&\mathbf{x}(\mathbf{x}-1)^{-2}, \quad \|\mathbf{x}\|>1, \mathbf{x}\in \mathbb{H(C)},\\
	\mathcal{X}[n^2](\mathbf{x})=&(\mathbf{x}^2+\mathbf{x})(\mathbf{x}-1)^{-3}, \quad \|\mathbf{x}\|>1, \mathbf{x}\in \mathbb{H(C)},\\
	\mathcal{X}[\mathbf{p}^n]({x})=&(1-\mathbf{p}{x}^{-1})^{-1},\quad |{x}|>\|\mathbf{p}\|,
	{x}\in \mathbb{C}, \mathbf{p}\in \mathbb{H(C)},\\
	\mathcal{X}[n\mathbf{p}^n](x)=&\mathbf{p}(1-\mathbf{p}x^{-1})^{-1},\quad |x|>\|\mathbf{p}\|, {x}\in \mathbb{C},  \mathbf{p}\in \mathbb{H(C)}.
	\end{aligned}
	\end{equation}
\end{example}

\begin{example}
	Let $\mathbf{q} \in \mathbb{H(C)}$, and $\mathbf{q}=q_0+\underline{\mathbf{q}}=q_0+q_1\mathbf{i}+q_2\mathbf{j}+q_3\mathbf{k}$.
	According to \cite{cai}, we know
	\begin{equation}\label{cos}
	\cos (\mathbf{q}n)=
	\begin{cases}
	\frac{1}{2}\left(e^{{\frac{\underline{\mathbf{q}}}{|\underline{\mathbf{q}}|}\mathbf{q}n}}+
	e^{-{\frac{\underline{\mathbf{q}}}{|\underline{\mathbf{q}}|}\mathbf{q}n}}\right),& |\underline{\mathbf{q}}|\neq0\\
	\cos({q_0n})-\underline{\mathbf{q}}n\sin({q_0n}),&|\underline{\mathbf{q}}|=0,
	\end{cases}
	\end{equation}
	\begin{equation}\label{sin}
	\sin (\mathbf{q}n)=
	\begin{cases}
	-\frac{1}{2}\frac{\underline{\mathbf{q}}}{|\underline{\mathbf{q}}|}\left(e^{{\frac{\underline{\mathbf{q}}}{|\underline{\mathbf{q}}|}\mathbf{q}n}}-
	e^{-{\frac{\underline{\mathbf{q}}}{|\underline{\mathbf{q}}|}\mathbf{q}n}}\right),& |\underline{\mathbf{q}}|\neq0\\
	\sin({q_0n})+\underline{\mathbf{q}}n\cos({q_0n}),&|\underline{\mathbf{q}}|=0,
	\end{cases}
	\end{equation}
	and
	\begin{equation}\label{e}
	e^{\mathbf{q}}=
	\begin{cases}
	e^{{q}_0}(\cos(|\underline{\mathbf{q}}|)+sgn(\underline{\mathbf{q}})\sin(|\underline{\mathbf{q}}|)),& |\underline{\mathbf{q}}|\neq0\\
	e^{{q}_0}(1+\underline{\mathbf{q}}),&|\underline{\mathbf{q}}|=0.
	\end{cases}
	\end{equation}
	Then, we have
	\begin{equation}
    \begin{aligned}
	&\mathcal{X}[\cos (\mathbf{q}n)](\mathbf{x})\\
=&
	\begin{cases}
	\frac{1}{2}(1-e^{\frac{\underline{\mathbf{q}}}{|\underline{\mathbf{q}}|}\mathbf{q}}\mathbf{x}^{-1})^{-1}\\
	+\frac{1}{2}(1-e^{-\frac{\underline{\mathbf{q}}}{|\underline{\mathbf{q}}|}\mathbf{q}}\mathbf{x}^{-1})^{-1},& |\underline{\mathbf{q}}|\neq0\\
	\frac{\mathbf{x}^2-\mathbf{x}\cos(\mathbf{q})}{\mathbf{x}^2-2\mathbf{x}\cos(\mathbf{q})+1}, &|\underline{\mathbf{q}}|=0.
	\end{cases}
    \end{aligned}
	\end{equation}
	
	\begin{equation}
    \begin{aligned}
	&\mathcal{X}[\sin (\mathbf{q}n)](\mathbf{x})\\
=&
	\begin{cases}
	-\frac{1}{2}\frac{\underline{\mathbf{q}}}{|\underline{\mathbf{q}}|}(1-e^{\frac{\underline{\mathbf{q}}}{|\underline{\mathbf{q}}|}\mathbf{q}}\mathbf{x}^{-1})^{-1}\\
+\frac{1}{2}\frac{\underline{\mathbf{q}}}{|\underline{\mathbf{q}}|}(1-e^{-\frac{\underline{\mathbf{q}}}{|\underline{\mathbf{q}}|}\mathbf{q}}\mathbf{x}^{-1})^{-1},& |\underline{\mathbf{q}}|\neq0\\
	\frac{\mathbf{x}\sin(\mathbf{q})}{\mathbf{x}^2-2\mathbf{x}\cos(\mathbf{q})+1}, &|\underline{\mathbf{q}}|=0.
	\end{cases}
    \end{aligned}
	\end{equation}
\end{example}
\begin{proof}
	Without loss of generality,  we only give the proof of the $\mathcal{X}[\cos \mathbf{q}n](\mathbf{x})$ here. Now we see that, when $|\underline{\mathbf{q}}|=0$, $\mathcal{X}[\cos \mathbf{q}n](\mathbf{x})$ is actually doing the classical Z transformation. As for $|\underline{\mathbf{q}}|\neq0$, taking Z transform on both sides of the Eq. (\ref{cos}), we have
	\begin{equation}
	\begin{aligned}
	\mathcal{X}[\cos (\mathbf{q}n)](\mathbf{x})=&\sum_{m=0}^\infty
	\frac{1}{2}\left(e^{{\frac{\underline{\mathbf{q}}}{|\underline{\mathbf{q}}|}\mathbf{q}n}}+
	e^{-{\frac{\underline{\mathbf{q}}}{|\underline{\mathbf{q}}|}\mathbf{q}n}}\right)\mathbf{x}^{-n}\\
	=& \frac{1}{2}\sum_{m=0}^\infty
	e^{{\frac{\underline{\mathbf{q}}}{|\underline{\mathbf{q}}|}\mathbf{q}n}}\mathbf{x}^{-n}\\
    &+\frac{1}{2}\sum_{m=0}^\infty
	e^{-{\frac{\underline{\mathbf{q}}}{|\underline{\mathbf{q}}|}\mathbf{q}n}}\mathbf{x}^{-n},
	\end{aligned}
	\end{equation}
	and since $\mathcal{X}[\mathbf{p}^n](\mathbf{x})=(1-\mathbf{p} \mathbf{x}^{-1})^{-1}$, let $\mathbf{p}=e^{{\frac{\underline{\mathbf{q}}}{|\underline{\mathbf{q}}|}\mathbf{q}}}$, therefore,
	\begin{equation}
    \begin{aligned}
	\mathcal{X}[\cos (\mathbf{q}n)](\mathbf{x})=
	\frac{1}{2}(1-e^{{\frac{\underline{\mathbf{q}}}{|\underline{\mathbf{q}}|}\mathbf{q}}} \mathbf{x}^{-1})^{-1}\\
	+\frac{1}{2}(1-e^{{-\frac{\underline{\mathbf{q}}}{|\underline{\mathbf{q}}|}\mathbf{q}}} \mathbf{x}^{-1})^{-1},
	\quad \|\mathbf{x}\|>\|\mathbf{p}\|.
    \end{aligned}
	\end{equation}
\end{proof}

\begin{example}
	Let  $\mathbf{a}_n=C^m_{n+m}\mathbf{q}^n$, $\mathbf{b}_n=C^m_{n}\mathbf{q}^n$, and $\mathbf{c}_n=\frac{\mathbf{q}^n}{n!}$. Z transform of the sequences $\mathbf{a}=\{\mathbf{a}_n\}_{n=0}^\infty$, $\mathbf{b}=\{\mathbf{b}_n\}_{n=0}^\infty$, $\mathbf{c}=\{\mathbf{c}_n\}_{n=0}^\infty$ are the following:
	\begin{equation}
	\begin{aligned}
	\mathcal{X}[\mathbf{a}](\mathbf{x})=&
	(1-\mathbf{x}^{-1}\mathbf{q})^{-m}(1-\mathbf{q}\mathbf{x}^{-1})^{-1},\quad \|\mathbf{x}\|>\|\mathbf{q}\|,\\
	\mathcal{X}[\mathbf{b}](\mathbf{x})=&
	(\mathbf{x}\mathbf{q}^{-1}-1)^{-m}(1-\mathbf{q} \mathbf{x}^{-1})^{-1},\quad \|\mathbf{x}\|>\|\mathbf{q}\|,\\
	\mathcal{X}[\mathbf{c}](\mathbf{x})=&e^{\mathbf{q}\mathbf{x}^{-1}},\quad \|\mathbf{x}\|>\|\mathbf{q}\|,\\
	\end{aligned}
	\end{equation}
\end{example}
\begin{proof}
	\begin{enumerate}[(I)]
		\item Let $\mathbf{S}_m=\sum_{n=0}^\infty C^m_{n+m}\mathbf{q}^n\mathbf{x}^{-n}$,
		then
		\begin{equation}
		\begin{aligned}
		(1-\mathbf{x}^{-1}\mathbf{q})\mathbf{S}_m=&
		(1-\mathbf{x}^{-1}\mathbf{q})\sum_{n=0}^\infty C^m_{n+m}\mathbf{q}^n\mathbf{x}^{-n}\\
		=&\sum_{n=0}^\infty C^m_{n+m}\mathbf{q}^n\mathbf{x}^{-n}\\
       &-\sum_{n=1}^\infty C^m_{n+m-1}\mathbf{q}^n\mathbf{x}^{-n},
		\end{aligned}
		\end{equation}
		since the first part of the aforementioned equation is equal to $1+\sum_{n=1}^\infty C^m_{n+m}\mathbf{q}^n\mathbf{x}^{-n}$, then we have
		\begin{equation}
		\begin{aligned}
		(1-\mathbf{x}^{-1}\mathbf{q})\mathbf{S}_m=&
		1+\sum_{n=1}^\infty (C^{m}_{n+m}-C^{m}_{n+m-1})\mathbf{q}^n\mathbf{x}^{-n}\\
		=&1+\sum_{n=1}^\infty C^{m-1}_{n+m-1}\mathbf{q}^n\mathbf{x}^{-n}\\
		=&\mathbf{S}_{m-1}.
		\end{aligned}
		\end{equation}
		Therefore,
		\begin{equation}
        \begin{aligned}
		\mathbf{S}_m=&(1-\mathbf{x}^{-1}\mathbf{q})^{-1}\mathbf{S}_{m-1}\\
		=&(1-\mathbf{x}^{-1}\mathbf{q})^{-2}\mathbf{S}_{m-2},
        \end{aligned}
		\end{equation}
		and $\mathbf{S}_0=(1-\mathbf{q}\mathbf{x}^{-1})^{-1}$,
		then
		\begin{equation}
		\begin{aligned}
&\mathcal{X}[C^m_{n+m}\mathbf{q}^n](\mathbf{x})\\
=&(1-\mathbf{x}^{-1}\mathbf{q})^{-m}(1-\mathbf{q}\mathbf{x}^{-1})^{-1},\\
&\|\mathbf{x}\|>\|\mathbf{q}\|.
        \end{aligned}
		\end{equation}
		\item Let $\mathbf{S}_m=\sum_{n=m}^\infty C^m_{n}\mathbf{q}^n\mathbf{x}^{-n}$,
		then
		\begin{equation}
        \begin{aligned}
		&(\mathbf{x}\mathbf{q}^{-1}-1)\mathbf{S}_m\\
        =&\sum_{n=m}^\infty C^m_{n}\mathbf{q}^{n-1}\mathbf{x}^{-n+1}-\sum_{n=m}^\infty
		C^m_{n}\mathbf{q}^n\mathbf{x}^{-n},
        \end{aligned}
		\end{equation}
		since the first part of the above formula is equal to \\$\mathbf{q}^{m-1}\mathbf{x}^{-m+1}+\sum_{n=m+1}^\infty C^m_{n}\mathbf{q}^{n-1}\mathbf{x}^{-n+1}$, then
		\begin{equation}
		\begin{aligned}
		(\mathbf{x}\mathbf{q}^{-1}-1)\mathbf{S}_m=&
		\mathbf{q}^{m-1}\mathbf{x}^{-m+1}\\
        &+\sum_{n=m}^\infty
		( C^m_{n+1}- C^m_{n})\mathbf{q}^n\mathbf{x}^{-n}\\
		=&\sum_{n=m-1}^\infty
		C^{m-1}_{n}\mathbf{q}^n\mathbf{x}^{-n}\\
        =&\mathbf{S}_{m-1}.
		\end{aligned}
		\end{equation}
		Then, $\mathbf{S}_m=(\mathbf{x}\mathbf{q}^{-1}-1)^{-1}\mathbf{S}_{m-1}
		=(\mathbf{x}\mathbf{q}^{-1}-1)^{-2}\mathbf{S}_{m-2}$, and $\mathbf{S}_0=(1-\mathbf{q} \mathbf{x}^{-1})^{-1}$. Therefore,
		\begin{equation}
        \begin{aligned}
		\mathcal{X}[C^m_{n}\mathbf{q}^n](\mathbf{x})=&
		(\mathbf{x}\mathbf{q}^{-1}-1)^{-m}(1-\mathbf{q} \mathbf{x}^{-1})^{-1},\\
        \|\mathbf{x}\|>&\|\mathbf{q}\|.
        \end{aligned}
		\end{equation}
		\item First, let $d_n=\frac{1}{n!}$, then
		\begin{equation}
		\mathcal{X}[d_n](\mathbf{x})=\sum_{n=0}^\infty \frac{1}{n!}\mathbf{x}^{-n}=e^{\mathbf{x}^{-1}},\quad \|\mathbf{x}\|>0.
		\end{equation}
		By rule (ii) of Theorem \ref{th1},
		\begin{equation}
		\mathcal{X}[\frac{\mathbf{q}^n}{n!}](\mathbf{x})=
		\mathcal{X}[\frac{1}{n!}](\mathbf{q}^{-1}\mathbf{x})
		=e^{\mathbf{q}\mathbf{x}^{-1}}.
		\end{equation}
	\end{enumerate}
\end{proof}
\begin{table*}[!t]
	\caption{Some sequence $\mathbf{g}_n$ and their biquaternion z transform $\mathcal{X}[\mathbf{g}](x)$}
	\label{table:p2}
	\centering
	\begin{tabular}{ccc}
		\toprule
		& $\mathbf{g}_n$ & $\mathcal{X}[\mathbf{g}]$\\
		\midrule
		1 & 1               & $(1-\mathbf{x}^{-1})^{-1}$  \\
		2 & $n$             & $\mathbf{x}(\mathbf{x}-1)^{-2}$  \\
		3 & $n^2$           & $(\mathbf{x}^2+\mathbf{x})(\mathbf{x}-1)^{-3}$  \\
		4 & $\mathbf{p}^n$  & $(1-\mathbf{p}{x}^{-1})^{-1}$ \\
		5 & $n\mathbf{p}^n$ & $\mathbf{p}(1-\mathbf{p}x^{-1})^{-1}$ \\
		6 & $\cos \mathbf{q}n$ & $\frac{1}{2}(1-e^{{\frac{\underline{\mathbf{q}}}{|\underline{\mathbf{q}}|}\mathbf{q}}} \mathbf{x}^{-1})^{-1}+
		\frac{1}{2}(1-e^{{-\frac{\underline{\mathbf{q}}}{|\underline{\mathbf{q}}|}\mathbf{q}}} \mathbf{x}^{-1})^{-1}$\\
		7 & $\sin \mathbf{q}n$ &
		$-\frac{1}{2}\frac{\underline{\mathbf{q}}}{|\underline{\mathbf{q}}|}(1-e^{\frac{\underline{\mathbf{q}}}{|\underline{\mathbf{q}}|}\mathbf{q}}\mathbf{x}^{-1})^{-1}+
		\frac{1}{2}\frac{\underline{\mathbf{q}}}{|\underline{\mathbf{q}}|}(1-e^{-\frac{\underline{\mathbf{q}}}{|\underline{\mathbf{q}}|}\mathbf{q}}\mathbf{x}^{-1})^{-1}$\\
		8 & $C^m_{n+m}\mathbf{q}^n$ & $(1-\mathbf{x}^{-1}\mathbf{q})^{-m}(1-\mathbf{q}\mathbf{x}^{-1})^{-1}$\\
		9 & $C^m_{n}\mathbf{q}^n$ & $(\mathbf{x}\mathbf{q}^{-1}-1)^{-m}(1-\mathbf{q} \mathbf{x}^{-1})^{-1}$\\
		10 & $\frac{\mathbf{q}^n}{n!}$ & $e^{\mathbf{q}\mathbf{x}^{-1}}$\\
		\bottomrule
	\end{tabular}
\end{table*}

\section{Examples}\label{E}
Suppose sequence $\mathbf{f}_{n}$ is unknown and some initial values of $\mathbf{f}_{n}$ are given. Consider a class of biquaternion recurrence relations as follows
\begin{equation}\label{difference}
\begin{aligned}
&{\sum_{m=0}^M}\mathbf{f}_{n+m}\mathbf{p}_m\\
=&{\sum_{k=0}^M}\mathbf{g}_{n+k}\mathbf{q}_k+
{\sum_{t=0}^T}\mathbf{h}_{n+t}\mathbf{u}_t+\cdots+
{\sum_{l=0}^L}\mathbf{y}_{n+l}\mathbf{d}_l,
\end{aligned}
\end{equation}
where sequence $\mathbf{f}_{n}$,  $\mathbf{g}_{n}$, $\mathbf{h}_{n}$,$\cdots$, $\mathbf{y}_{n}$, and coefficients $\mathbf{p}_n$, $\mathbf{q}_n$, $\mathbf{u}_n$, $\cdots$, $\mathbf{d}_{n}$ are all known. If $\mathbf{g}_{n}=\mathbf{h}_{n}=\cdots=\mathbf{y}_{n}=0$ or
$\mathbf{q}_n=\mathbf{u}_n=\cdots=\mathbf{d}_{n}=0$, then we call the equation (\ref{difference}) homogeneous, otherwise it becomes inhomogeneous. Next, we will give several examples of the right-side coefficients homogeneous linear biquaternion equation and the right-side coefficients inhomogeneous linear biquaternion equation, respectively, to further explain how to solve the equations with the biquaternion z transform.

\begin{example}
	If we know that $\mathbf{f}_0=1$, $\mathbf{f}_1=\mathbf{i}+\mathbf{j}$ and
	\begin{equation}\label{ex1}
	\mathbf{f}_{n+2}=\mathbf{f}_{n+1}(\mathbf{i}+\mathbf{j}-1)+\mathbf{f}_n(\mathbf{i}+\mathbf{j}), \ n=0,1,2,\cdots,
	\end{equation}
	find a formular for $\mathbf{f}_n$.
\end{example}
Let $\mathcal{X}[\mathbf{f}](x)={\sum_{n=0}^\infty}\mathbf{f}_nx^{-n}$. It is clear that z transformation on both sides of the aforementioned equation yields an algebraic equation of
$\mathcal{X}[\mathbf{f}](x)$, which is
\begin{equation}\label{ex1-1}
\begin{aligned}
{\sum_{n=0}^\infty}\mathbf{f}_{n+2}x^{-n}
=&{\sum_{n=0}^\infty}\mathbf{f}_{n+1}x^{-n}(\mathbf{i}+\mathbf{j}-1)\\
&+{\sum_{n=0}^\infty}\mathbf{f}_nx^{-n}(\mathbf{i}+\mathbf{j}).
\end{aligned}
\end{equation}
We notice that, firstly,
\begin{equation}
\begin{aligned}
{\sum_{n=0}^\infty}\mathbf{f}_{n+1}x^{-n}
=&\left({\sum_{k=1}^\infty}\mathbf{f}_{k}x^{-k}\right)x\\
=&\left({\sum_{k=0}^\infty}\mathbf{f}_{k}x^{-k}-\mathbf{f}_0\right)x\\
=&\left(\mathcal{X}[\mathbf{f}](x)-1\right)x,
\end{aligned}
\end{equation}
and
\begin{equation}
\begin{aligned}
{\sum_{n=0}^\infty}\mathbf{f}_{n+2}x^{-n}
=&\left({\sum_{k=2}^\infty}\mathbf{f}_{k}x^{-k}\right)x^{2}\\
=&\left({\sum_{k=0}^\infty}\mathbf{f}_{k}x^{-k}-\mathbf{f}_0-\mathbf{f}_1x^{-1}\right)x^{2}\\
=&\left(\mathcal{X}[\mathbf{f}](x)-1-(\mathbf{i}+\mathbf{j})x^{-1}\right)x^{2}.
\end{aligned}
\end{equation}
Thus the equation (\ref{ex1-1}) can be written as
\begin{equation}
\begin{aligned}
&\left(\mathcal{X}[\mathbf{f}](x)-1-(\mathbf{i}+\mathbf{j})x^{-1}\right)x^{2}\\
=&\left(\mathcal{X}[\mathbf{f}](x)-1\right)x(\mathbf{i}+\mathbf{j}-1)
-\mathcal{X}[\mathbf{f}](x)(\mathbf{i}+\mathbf{j}),
\end{aligned}
\end{equation}
from which $\mathcal{X}[\mathbf{a}](x)$ can be solved. After simplification we have
\begin{equation}
\mathcal{X}[\mathbf{f}](x)=(1-(\mathbf{i}+\mathbf{j})x^{-1})^{-1}.
\end{equation}
We found in the preceding Table \ref{table:p2} that this is the z transformation of the sequence
\begin{equation}
\mathbf{f}_n=(\mathbf{i}+\mathbf{j})^n,  \ n=0,1,2,\cdots.
\end{equation}
Next, we can check the result by returning to the  initial conditions of the problem: $\mathbf{f}_0=1$, $\mathbf{f}_1=\mathbf{i}+\mathbf{j}$ are all right; and if $\mathbf{f}_n=(\mathbf{i}+\mathbf{j})^n$ and $\mathbf{f}_{n+1}=(\mathbf{i}+\mathbf{j})^{n+1}$, then
\begin{equation}\label{ex2}
\begin{aligned}
&\mathbf{f}_{n+1}(\mathbf{i}+\mathbf{j}-1)+\mathbf{f}_n(\mathbf{i}+\mathbf{j})\\
=&(\mathbf{i}+\mathbf{j})^{n+1}(\mathbf{i}+\mathbf{j}-1)+(\mathbf{i}+\mathbf{j})^n(\mathbf{i}+\mathbf{j})\\
=&(\mathbf{i}+\mathbf{j})^{n+2},
\end{aligned}
\end{equation}
which is also right.

\begin{example}
	Determine the numbers $\mathbf{f}_n$, $n=0,1,2,\cdots$, if~ $\mathbf{f}_0=1$, $\mathbf{f}_1=I\mathbf{j}$, and for $n=0,1,2,\cdots$
	\begin{equation}\label{ex3-1}
	\mathbf{f}_{n+2}=2\mathbf{f}_{n}+\mathbf{f}_{n+1}(I\mathbf{j}).
	\end{equation}
\end{example}
Let $\mathbf{F}(x)={\sum_{n=0}^\infty}\mathbf{f}_{n}x^{-n}$. It is clear that z transformation on both sides of equation (\ref{ex3-1}) yields an algebraic equation of
$\mathbf{F}(x)$, which is
\begin{equation}
{\sum_{n=0}^\infty}\mathbf{f}_{n+2}x^{-n}
=2{\sum_{n=0}^\infty}\mathbf{f}_{n}x^{-n}
+{\sum_{n=0}^\infty}\mathbf{f}_{n+1}x^{-n}(I\mathbf{j}).
\end{equation} Using (\ref{th3})  form the Theorem \ref{th1}, we get
\begin{equation}
[\mathbf{F}(x)-\mathbf{f}_0-\mathbf{f}_1x^{-1}]x^2=2\mathbf{F}(x)+
[\mathbf{F}(x)-\mathbf{f}_0]x,
\end{equation}
where $\mathbf{f}_0=1$, $\mathbf{f}_1=I\mathbf{j}$ and after computation we have
\begin{equation}
\mathbf{F}(x)=[1-(I\mathbf{j})x^{-1}]^{-1}.
\end{equation}
Then we get the expression of $\mathbf{f}_n$, that is
\begin{equation}
\mathbf{f}_n=(I\mathbf{j})^{n}.
\end{equation}
When we check the result, we find that it satisfies the initial conditions and equation (\ref{ex3-1}).

\begin{example}
	Find the numbers $\mathbf{f}_n$, $n=0,1,2,\cdots$, such that $\mathbf{f}_0=1$, $\mathbf{f}_1=I\mathbf{i}+I$ and $\mathbf{f}_{n+2}=\mathbf{f}_{n+1}(-2I^{-1})+2\mathbf{f}_{n}$.
\end{example}
Let $\mathbf{F}(x)={\sum_{n=0}^\infty}\mathbf{f}_{n}x^{-n}$. It is clear that z transformation on both sides of the aforementioned equation yields an algebraic equation of
$\mathbf{F}(x)$, which is
\begin{equation}
{\sum_{n=0}^\infty}\mathbf{f}_{n+2}x^{-n}
={\sum_{n=0}^\infty}\mathbf{f}_{n+1}x^{-n}(-2I^{-1})
+2{\sum_{n=0}^\infty}\mathbf{f}_{n}x^{-n}.
\end{equation}
Using (\ref{th3}) from the Theorem \ref{th1}, we get
\begin{equation}
[\mathbf{F}(x)-\mathbf{f}_0-\mathbf{f}_1x^{-1}]x^2=
[\mathbf{F}(x)-\mathbf{f}_0]x(-2I^{-1})+2\mathbf{F}(x),
\end{equation}
where $\mathbf{f}_0=1$, $\mathbf{f}_1=I\mathbf{i}+I$. Then
\begin{equation}
\mathbf{F}(x)(x^2+2I^{-1}x-2)=x(x+I\mathbf{i}+I+2I^{-1}).
\end{equation}
That is
\begin{equation}
\mathbf{F}(x)=[1-(I\mathbf{i}+I)x^{-1}]^{-1}.
\end{equation}
Using the equation (\ref{definition}) from definition \ref{de}, the final expression can be rewritten as
\begin{equation}
\mathbf{f}_n=(I\mathbf{i}+I)^n.
\end{equation}
Obviously, it satisfies the initial conditions.

\begin{remark}
	The solving process of the right-side coefficient linear homogeneous biquaternion recurrence relations is as follows:
	\begin{itemize}
		\item []{\bf{Step} 1}\label{step1}. Apply the biquaternion z transform (\ref{definition}) on both sides of the given equation.
		\item []{\bf{Step} 2}.\label{step2} Use the basic properties in Theorem \ref{th1} to simplify the result of step 1.
		\item []{\bf{Step} 3}. \label{step3} Using quaternion algebra, the result of step 2 is reduced to a monadic equation about the z transformation of the sequence.
		\item []{\bf{Step} 4}. \label{step4} The biquaternion sequence is obtained by comparing Table \ref{table:p2} and step 3.
	\end{itemize}
\end{remark}

The three homogeneous biquaternion recurrence relations with right-side coefficients have been solved by the biquaternion z transformation method. In the following, let's focus on the inhomogeneous biquaternion recurrence relations with the right-side coefficients.

\begin{example}
	Find $\mathbf{f}_n$, $n=0,1,2,\cdots$, such that $\mathbf{f}_0=\mathbf{f}_1=0$ and $\mathbf{f}_{n+2}-2\mathbf{f}_{n+1}+\mathbf{f}_n=2+(2-2I\mathbf{k})(I\mathbf{k})^n-(I\mathbf{k}+1)^n$.
\end{example}
It is clear that if we taking z transforms for both sides of above equation, it will gives
\begin{equation}
\begin{aligned}
&{\sum_{n=0}^\infty}\mathbf{f}_{n+2}x^{-n}-
2{\sum_{n=0}^\infty}\mathbf{f}_{n+1}x^{-n}+
{\sum_{n=0}^\infty}\mathbf{f}_{n}x^{-n}\\=
&2x(x-1)^{-1}+2(1-I\mathbf{k})[1-(I\mathbf{k})x^{-1}]^{-1}\\
&-[1-(I\mathbf{k}+1)x^{-1}]^{-1}.
\end{aligned}
\end{equation}
Let $\mathbf{F}(x)={\sum_{n=0}^\infty}\mathbf{f}_{n}x^{-n}$. Using (\ref{th3}) from the Theorem \ref{th1}, we get
\begin{equation}
\begin{aligned}
&x^2\mathbf{F}(x)-2x\mathbf{F}(x)+\mathbf{F}(x)\\
=&2x(x-1)^{-1}+2(1-I\mathbf{k})[1-(I\mathbf{k})x^{-1}]^{-1}\\
&-[1-(I\mathbf{k}+1)x^{-1}]^{-1}.
\end{aligned}
\end{equation}
That is
\begin{equation}\label{F}
\begin{aligned}
\mathbf{F}(x)
=&2(1-I\mathbf{k})(x-1)^{-2}[1-(I\mathbf{k})x^{-1}]^{-1}\\
&2x(x-1)^{-3}-(x-1)^{-2}[1-(I\mathbf{k}+1)x^{-1}]^{-1}.
\end{aligned}
\end{equation}
Noting that
\begin{equation}
\begin{aligned}
(x-1)^{-1}&={\sum_{n=0}^\infty}x^{-n-1},\\
(x-1)^{-2}&={\sum_{n=0}^\infty}(n+1)x^{-n-2},\\
(x-1)^{-3}&={\sum_{n=0}^\infty}\frac{(n+1)(n+2)}{2}x^{-n-3},
\end{aligned}
\end{equation}
and
\begin{equation}
\begin{aligned}\label{jj}
[1-(I\mathbf{k})x^{-1}]^{-1}=&{\sum_{n=0}^\infty}(I\mathbf{k})^nx^{-n},\\
[1-(I\mathbf{k}+1)x^{-1}]^{-1}=&{\sum_{n=0}^\infty}(I\mathbf{k}+1)^nx^{-n}\\
=&{\sum_{n=0}^\infty}2^{n-1}(I\mathbf{k}+1)x^{-n},\\
\end{aligned}
\end{equation}
which provide that
\begin{equation}
\begin{aligned}
2x(x-1)^{-3}=&2x{\sum_{n=0}^\infty}\frac{(n+1)(n+2)}{2}x^{-n-3}\\
=&{\sum_{n=0}^\infty}(n^2-n)x^{-n},
\end{aligned}
\end{equation}
\begin{equation}
\begin{aligned}
&2(1-I\mathbf{k})(x-1)^{-2}[1-(I\mathbf{k})x^{-1}]^{-1}\\
=&2(1-I\mathbf{k}){\sum_{n=0}^\infty}(n+1)x^{-n-2}{\sum_{n=0}^\infty}(I\mathbf{k})^nx^{-n}\\
=&{\sum_{n=0}^\infty}\{2n-2-2[(I\mathbf{k})+(I\mathbf{k})^2+\cdots+(I\mathbf{k})^{n-1}]\}\\
=&\left\{
\begin{array}{ll}
{\sum_{n=0}^\infty}\{2n-2-(n-1)(1+I\mathbf{k})\}x^{-n}, \hbox{when n is odd;} \\
{\sum_{n=0}^\infty}\{2n-2-(n-2)(1+I\mathbf{k})-I\mathbf{k}\}x^{-n}, \hbox{when n is even,}
\end{array}
\right.
\end{aligned}
\end{equation}
and
\begin{equation}
\begin{aligned}
&(x-1)^{-2}[1-(I\mathbf{k}+1)x^{-1}]^{-1}\\
=&{\sum_{n=0}^\infty}(n+1)x^{-n-2}{\sum_{n=0}^\infty}2^{n-1}(I\mathbf{k}+1)x^{-n}\\
=&{\sum_{n=0}^\infty}\{n-1+(2^{n-1}-n)(I\mathbf{k}+1)\}x^{-n}.
\end{aligned}
\end{equation}
Thus, when $n$ is odd, the equation (\ref{F}) can be rewritten as
\begin{equation}\label{odd}
\begin{aligned}
\mathbf{F}(x)
=&{\sum_{n=0}^\infty}\{n^2-n+2n-2-(n-1)(1+I\mathbf{k})\\
&+n-1+(2^{n-1}-n)(I\mathbf{k}+1)\}x^{-n}\\
=&{\sum_{n=0}^\infty}\{n^2-(I\mathbf{k}+1)^n+I\mathbf{k}\}x^{-n},
\end{aligned}
\end{equation}
and when $n$ is even, the equation (\ref{F}) can be rewritten as
\begin{equation}\label{even}
\begin{aligned}
\mathbf{F}(x)
=&{\sum_{n=0}^\infty}\{n^2-n+2n-2-(n-2)(1+I\mathbf{k})\\
&-I\mathbf{k}+n-1+(2^{n-1}-n)(I\mathbf{k}+1)\}x^{-n}\\
=&{\sum_{n=0}^\infty}\{n^2-(I\mathbf{k}+1)^n+1\}x^{-n}.
\end{aligned}
\end{equation}
From the above equations (\ref{odd}) and (\ref{even}), the final expression can be rewritten as
\begin{equation}
f_n=n^2-(I\mathbf{k}+1)^n+(I\mathbf{k})^n.
\end{equation}

We can also check the result by returning to the statement of the question: $\mathbf{f}_0=0$ and $\mathbf{f}_1=0$ are all right; and if
$\mathbf{f}_n=n^2-(I\mathbf{k}+1)^n+(I\mathbf{k})^n$, $\mathbf{f}_{n+1}=(n+1)^2-(I\mathbf{k}+1)^{n+1}+(I\mathbf{k})^{n+1}$, and
$\mathbf{f}_{n+2}=(n+2)^2-(I\mathbf{k}+1)^{n+2}+(I\mathbf{k})^{n+2}$, then
\begin{equation}
\begin{aligned}
&\mathbf{f}_{n+2}-2\mathbf{f}_{n+1}+\mathbf{f}_n\\
=&(n+2)^2-(I\mathbf{k}+1)^{n+2}+(I\mathbf{k})^{n+2}\\
&-2(n+1)^2+2(I\mathbf{k}+1)^{n+1}+n^2\\
&-2(I\mathbf{k})^{n+1}-(I\mathbf{k}+1)^n+(I\mathbf{k})^n\\
=&2+(2-2I\mathbf{k})(I\mathbf{k})^n-(I\mathbf{k}+1)^n,
\end{aligned}
\end{equation}
which is also right.

\begin{remark}
	Different from solving the right-side coefficient linear inhomogeneous biquaternion recurrence relations, the inhomogeneous cases are often more complicated, which increases the difficulty of calculation of quaternion algebras. In this case, we can consider using the Taylor expansion in the third step, and finally find the sequence by referring to the definition (\ref{definition}) and properties of the z transformation of the biquaternion.
\end{remark}

\begin{example}
	Find $\mathbf{f}(t)$, $t=0,1,2,\cdots$, from the equation
	\begin{equation}\label{ex22}
	{\sum_{n=0}^t}(3\mathbf{j})^{n}\mathbf{f}{(t-n)}=(2\mathbf{i})^{n}, t=0,1,2,\cdots.
	\end{equation}
\end{example}
It is clear that the left hand side is the convolution of $x$ and the function $t\rightarrow (3j)^t$, so that taking z transform of both members gives
\begin{equation}
({1-3\mathbf{j}x^{-1}})^{-1}\mathcal{X}[\mathbf{f}](x)=({1-2\mathbf{i}x^{-1}})^{-1}.
\end{equation}
Use the result of Eq. (\ref{definition}). We get
\begin{equation}
\begin{aligned}
&\mathcal{X}[\mathbf{f}](x)\\
=&({1-3\mathbf{j}x^{-1}})({1-2\mathbf{i}x^{-1}})^{-1}\\
=&({1-2\mathbf{i}x^{-1}})^{-1}-3\mathbf{j}x^{-1}({1-2\mathbf{i}x^{-1}})^{-1},
\end{aligned}
\end{equation}
and using Eq. (\ref{definition}) and rule (iv) of Theorem \ref{th1}, we see that
\begin{equation}
\mathbf{f}(t)=(2\mathbf{i})^t-3\mathbf{j}(2\mathbf{i})^{t-1}, t=0,1,2,\cdots.
\end{equation}

\section{Conclusion}
\label{C}
The solution of a class of biquaternion recurrence relations has been solved in this paper. After introducing the z transformation of the  biquaternion sequence, we give its basic properties. Besides, we discussed the relationship between a biquaternion-valued number sequence and a complex-valued number sequence: the convolution theorem. Moreover, applying the biquaternion z transform to the class of biquaternion recurrence relations can indicate the capability and efficiency of the method.

\section*{References}

\end{document}